\newtheorem*{thm}{Theorem}
\newtheorem*{corollary}{Corollary}
\newtheorem{lemma}{Lemma}
\theoremstyle{definition}
\theoremstyle{remark}
\DeclareMathOperator{\q}{q-quant}
\begin{document}

\title[]{Quantile-Based Random Kaczmarz for\\ corrupted linear systems of equations}
\subjclass[2010]{15A09, 15A18, 60D05, 65F10, 90C06} 
\keywords{Corrupted Linear Systems, Kaczmarz, Stochastic Gradient Descent}
\thanks{S.S. is supported by the NSF (DMS-2123224) and the Alfred P. Sloan Foundation.}

\author[]{Stefan Steinerberger}
\address{Department of Mathematics, University of Washington, Seattle}
\email{steinerb@uw.edu}

\begin{abstract} We consider linear systems $Ax = b$ where $A \in \mathbb{R}^{m \times n}$ consists of normalized rows, $\|a_i\|_{\ell^2} = 1$, and where up to $\beta m$ entries of $b$ have been corrupted (possibly by arbitrarily large numbers). Haddock, Needell, Rebrova \& Swartworth propose a quantile-based Random Kaczmarz method and show that for certain random matrices $A$ it converges with high likelihood to the true solution. We prove a deterministic version by constructing, for any matrix $A$, a number $\beta_A$ such that there is convergence for all perturbations with $\beta < \beta_A$. Assuming a random matrix heuristic, this proves convergence for tall Gaussian matrices with up to $\sim 0.5\%$ corruption (a number that can likely be improved).
\end{abstract}

\maketitle

\section{Introduction}

\subsection{The Kaczmarz method} We start by explaining the Kaczmarz method \cite{kac} for uncorrupted linear systems of equations.
Let $A \in \mathbb{R}^{m \times n}$, $m \geq n$, and suppose that
$ Ax =b,$
where $x \in \mathbb{R}^n$ is the (unknown) solution of interest and $b$ is a given right-hand side.
Throughout this paper, we use $a_1, \dots, a_m$ to denote the rows of $A$. 
 One way
of interpreting this system geometrically is to write it as 
$$ \left\langle a_i, x \right\rangle = b_i \qquad \mbox{for}~1 \leq i \leq m$$
and to interpret the solution as the intersection of hyperplanes. The idea behind the Kaczmarz method is as follows: given an approximation
of the solution $x_k$, take an arbitrary equation, say the $i-$th equation, and project $x_k$ onto the hyperplane described by
the $i-$th equation $ \left\langle a_i, x \right\rangle = b_i$, formally:
$$ x_{k+1} = x_k + \frac{b_i - \left\langle a_i, x_k\right\rangle}{\|a_i\|^2}a_i.$$

 The Pythagorean theorem implies that $\|x_{k+1} - x \| \leq \|x_k - x\|$ suggesting convergence of the method. It is not easy to make this quantitative \cite{gal}.
 Strohmer \& Vershynin \cite{strohmer} proposed to randomize the method and determined the convergence rate in terms of the smallest singular value and the Frobenius norm of $A$.

\begin{thm}[Strohmer \& Vershynin,  \cite{strohmer}] If $a_i$ is chosen with likelihood $\|a_i\|^2$, then
$$ \mathbb{E}~ \|x_k - x\|^2 \leq \left(1 - \frac{1}{\|A^{-1}\|^2 \cdot \|A\|_F^2}\right)^k \|x_0 - x\|^2.$$
\end{thm}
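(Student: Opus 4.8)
The plan is to track the squared error $\|x_k - x\|^2$ and to show that it contracts in expectation by precisely the stated factor at each step. Write $e_k = x_k - x$. Since $x$ solves the uncorrupted system, $b_i = \langle a_i, x\rangle$, and therefore $b_i - \langle a_i, x_k\rangle = \langle a_i, x - x_k\rangle = -\langle a_i, e_k\rangle$. Consequently the Kaczmarz update rewrites cleanly as
\[ e_{k+1} = e_k - \frac{\langle a_i, e_k\rangle}{\|a_i\|^2}\, a_i, \]
which is exactly $e_k$ with its orthogonal projection onto the line $\mathbb{R}a_i$ removed. The Pythagorean theorem (already invoked in the introduction) then gives the pointwise identity
\[ \|e_{k+1}\|^2 = \|e_k\|^2 - \frac{\langle a_i, e_k\rangle^2}{\|a_i\|^2}. \]

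Next I would condition on $x_k$ and take the expectation over the random index $i$, which is drawn with probability $\|a_i\|^2/\|A\|_F^2$. The point of this particular weighting is that it cancels the $\|a_i\|^2$ in the denominator:
\[ \mathbb{E}\left[\frac{\langle a_i, e_k\rangle^2}{\|a_i\|^2}\,\Big|\,x_k\right] = \sum_{i=1}^m \frac{\|a_i\|^2}{\|A\|_F^2}\cdot\frac{\langle a_i, e_k\rangle^2}{\|a_i\|^2} = \frac{1}{\|A\|_F^2}\sum_{i=1}^m \langle a_i, e_k\rangle^2 = \frac{\|Ae_k\|^2}{\|A\|_F^2}, \]
since $Ae_k$ is the vector with entries $\langle a_i, e_k\rangle$. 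Hence $\mathbb{E}[\|e_{k+1}\|^2\mid x_k] = \|e_k\|^2 - \|Ae_k\|^2/\|A\|_F^2$.

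To finish, I would lower bound $\|Ae_k\|^2 \geq \sigma_{\min}(A)^2\|e_k\|^2$, which is valid because $A$ (being $m\times n$ with $m\ge n$ and, implicitly, full column rank) satisfies $\|Av\|\ge\sigma_{\min}(A)\|v\|$ for all $v\in\mathbb{R}^n$; recall $\|A^{-1}\| = \sigma_{\min}(A)^{-1}$. This gives
\[ \mathbb{E}[\|e_{k+1}\|^2\mid x_k] \leq \left(1 - \frac{1}{\|A^{-1}\|^2\,\|A\|_F^2}\right)\|e_k\|^2, \]
and then taking the full expectation, using the tower property to peel off one step at a time, and iterating over $k$ yields the claim with $\|x_0 - x\|^2$ on the right.

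The argument is short, and the only genuine subtlety is the step exploiting the row-norm weighting: under \emph{uniform} sampling the relevant quantity would instead be $\tfrac1m\sum_i \langle a_i,e_k\rangle^2/\|a_i\|^2$, which is not controlled by $\|Ae_k\|$ in general, so the weighted distribution is essential rather than cosmetic. A secondary point to state carefully is that the spectral lower bound $\|Av\|\ge\sigma_{\min}(A)\|v\|$ requires $A$ to have trivial kernel, which is exactly the implicit hypothesis that $\|A^{-1}\|$ is finite.
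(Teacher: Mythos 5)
Your proof is correct and is precisely the standard Strohmer--Vershynin argument: the paper itself does not reprove this theorem (it is quoted from \cite{strohmer}), and your chain --- Pythagorean identity for the projection step, cancellation of $\|a_i\|^2$ under the row-norm-weighted sampling to get the conditional decrease $\|Ae_k\|^2/\|A\|_F^2$, the bound $\|Ae_k\|^2\geq \sigma_{\min}(A)^2\|e_k\|^2$, and the tower property --- is exactly the proof in that reference. Your closing remarks about why the weighting is essential and why full column rank (finiteness of $\|A^{-1}\|$) is needed are also accurate.
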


This rate is known to be essentially best possible \cite{stein}.  The work of Strohmer \& Vershynin has inspired a lot of subsequent work, see \cite{eldar, elf, gower, gower2, jiao, leventhal, liu, ma, moor, need, need2, need25, need3, need4, nutini, popa, stein2, stein3, stein4, strohmer, zhang, zouz}.

\subsection{Corrupted Linear Systems.} 
Let us now suppose that we are interested in finding the solution $x$ of the linear system $Ax = b_t$, where $b_t \in \mathbb{R}^m$ is the true right-hand side. However,
instead of the true right-hand side $b_t$, we are only given
$$  b = b_t + \mbox{error} \qquad \mbox{where}~\mbox{error} \in \mathbb{R}^m$$
and we know that the error is supported on a $\beta-$fraction of its length 
$$ \| \mbox{error} \|_{\ell^0} \leq \beta m.$$
We will not make any further assumptions on the error (in particular, the entries could be arbitrarily large). Is it then still possible to recover the true solution $x$?
We assume, throughout the paper, that all rows of $A$ are normalized: $\|a_i\|_{\ell^2} = 1$. 
A fascinating approach was recently proposed by Haddock, Needell, Rebrova \& Swartworth \cite{hadd}: given an approximate solution $x_k$, consider the set $ \left\{ \left| \left\langle x_k, a_i \right\rangle - b_i \right| : 1 \leq i \leq m \right\}$. This set measures, essentially, how `wrong' each of the equations is. The Random Kaczmarz method would now pick one of the equations uniformly at random. Haddock, Needell, Rebrova \& Swartworth \cite{hadd} propose to instead look at the $q-$th quantile of the set, these are the $qm$ `least incorrect' equations and then pick one at random and use that for a step of the Kaczmarz method (see also \S 4.1). \\

By looking at equations that are only violated `a little', we are, hopefully, more likely to consider equations that are not corrupted (because corrupted equations are presumably violated by a lot); moreover, even if we were to pick an incorrect equation (one that was actually corrupted),
selecting one with little overall error ensures that the error incurred in this step is not too large: the update may remove us from the correct solution but not by too much. Haddock, Needell, Rebrova \& Swartworth \cite{hadd} show that if $A$ is a random
matrix of a certain type, the method can recover the true solution with high likelihood as long as the support of the corruption $\beta$ is sufficiently small.

\begin{thm}[Haddock, Needell, Rebrova \& Swartworth, \cite{hadd}] For a certain class of random matrices $A \in \mathbb{R}^{m \times n}$ and $m/n$ sufficiently large, if
the support of the error is sufficiently small, $\beta < \min(c_1 q^2, 1-q)$, the $q-$quantile Random Kaczmarz method converges exponentially with likelihood at least $1 - c_2 \exp(-c_q m)$.
\end{thm}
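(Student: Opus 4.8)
The plan is to isolate a few ``quantile regularity'' properties of $A$, show they force exponential convergence \emph{deterministically} for every corruption with $\beta<\min(c_1q^2,1-q)$, and then check that a tall random matrix has these properties with the stated probability. Write $e_k=x_k-x$; at step $k$ let $S_k$ be the $q$-quantile set (the $qm$ indices of smallest residual $r_i=|\langle a_i,x_k\rangle-b_i|$) and $\tau_k$ the $q$-quantile of those residuals, so $r_i\le\tau_k$ for $i\in S_k$. Writing $b_i=\langle a_i,x\rangle+\epsilon_i$ for the (possibly zero) corruption in coordinate $i$ and using $\|a_i\|=1$, choosing equation $i$ gives
\[
\|e_{k+1}\|^2=\|e_k\|^2-\langle a_i,e_k\rangle^2+\epsilon_i^2,
\]
with $\epsilon_i=0$ when $i$ is uncorrupted, and also $|\epsilon_i-\langle a_i,e_k\rangle|=r_i\le\tau_k$ whenever $i\in S_k$. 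So the method is governed by how small $\tau_k$ is relative to $\|e_k\|$, which bounds the damage of a corrupted step via $\epsilon_i^2-\langle a_i,e_k\rangle^2=(\epsilon_i-\langle a_i,e_k\rangle)(\epsilon_i+\langle a_i,e_k\rangle)$, of absolute value at most $\tau_k^2+2\tau_k|\langle a_i,e_k\rangle|$, and by the size of $\sum_{i\in S_k\cap G}\langle a_i,e_k\rangle^2$, the genuine progress, $G$ denoting the uncorrupted rows.

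This suggests asking of $A$, for a threshold $t_0=t_0(q,\beta)$ and constants $c_A,\sigma_A$ that may depend on $q,\beta,m/n$: \textbf{(P1)} every unit vector $w$ has at least $(q+2\beta)m$ rows with $|\langle a_i,w\rangle|\le t_0$; \textbf{(P2)} for every unit $w$, with $s_1(w)\le\cdots\le s_m(w)$ the increasingly sorted list $\{\langle a_i,w\rangle^2\}_{i=1}^m$, one has $\sum_{j=1}^{(q-\beta)m}s_j(w)\ge c_A$; \textbf{(P3)} every $\beta m\times n$ row-submatrix $A_B$ has $\|A_B\|^2\le\sigma_A$. Given these, the deterministic argument is: applying (P1) to $w=e_k/\|e_k\|$ and discarding the $\le\beta m$ corrupted rows leaves $\ge qm$ uncorrupted rows of residual $\le t_0\|e_k\|$, so $\tau_k\le t_0\|e_k\|$; then, averaging the identity above uniformly over $i\in S_k$,
\[
\mathbb{E}\bigl[\|e_{k+1}\|^2\mid x_k\bigr]-\|e_k\|^2=\frac{1}{qm}\Bigl(-\!\!\sum_{i\in S_k\cap G}\!\!\langle a_i,e_k\rangle^2+\!\!\sum_{i\in S_k\cap B}\!\!\bigl(\epsilon_i^2-\langle a_i,e_k\rangle^2\bigr)\Bigr).
\]
The uncorrupted rows of $S_k$ number at least $(q-\beta)m$, and a subset's $j$-th smallest value is at least the $j$-th smallest of the whole list, so the first sum is $\ge\|e_k\|^2\sum_{j=1}^{(q-\beta)m}s_j(w)\ge c_A\|e_k\|^2$ by (P2); the second is at most $\sum_{i\in B}(\tau_k^2+2\tau_k|\langle a_i,e_k\rangle|)\le\beta m\,\tau_k^2+2\tau_k\sqrt{\beta m}\,\|A_B\|\,\|e_k\|\le(\beta m\,t_0^2+2t_0\sqrt{\beta m\,\sigma_A})\|e_k\|^2$ in absolute value, by Cauchy--Schwarz, $\tau_k\le t_0\|e_k\|$, and (P3). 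Hence whenever $c_A>\beta m\,t_0^2+2t_0\sqrt{\beta m\,\sigma_A}$ we get $\mathbb{E}[\|e_{k+1}\|^2\mid x_k]\le(1-\rho)\|e_k\|^2$ for an explicit $\rho>0$, so $\mathbb{E}\|e_k\|^2\le(1-\rho)^k\|e_0\|^2$, and a supermartingale/Markov argument upgrades this to exponential convergence of $\|e_k\|$.

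It remains to verify (P1)--(P3) for the random model. For a \emph{fixed} unit $w$, the quantities in (P1)--(P2) are sums of i.i.d.\ bounded functions of $\langle a_i,w\rangle$, so putting $t_0$ and $c_A$ strictly inside their means and using a Hoeffding/DKW-type inequality makes the failure probability $\le e^{-c(q,\beta)m}$; uniformity over all $w$ then comes from an $\eta$-net of the sphere (cardinality $\le e^{Cn}$), a union bound --- harmless because $m/n$ is large, which is where the exponent $c_q$ comes from --- and a monotonicity/Lipschitz argument (sorted values and counts move by $O(\eta)$ under an $\eta$-change of $w$) to pass from the net to all $w$ after slightly shrinking $t_0,c_A$. (P3) is the standard bound on the top singular value of a $\beta m\times n$ matrix of i.i.d.\ normalized rows, union-bounded over the $\binom{m}{\beta m}$ possible supports of the corruption. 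For Gaussian rows one gets, up to constants, $t_0\asymp q/\sqrt n$, $c_A\asymp(q-\beta)^3 m/n$, $\sigma_A\asymp\beta m/n$, and the condition $c_A>\beta m\,t_0^2+2t_0\sqrt{\beta m\,\sigma_A}$ becomes $(q-\beta)^3\gtrsim\beta q^2+q\beta$, i.e.\ $\beta<c_1q^2$ once $\beta$ is small compared to $q$, while $\beta<1-q$ is needed only so that (P1) can hold with a positive gap --- precisely the claimed range.

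The step I expect to be the main obstacle is making the two-sided estimates behind (P1) and especially (P2) \emph{quantitatively sharp}, uniformly in the trajectory-dependent direction $w=e_k/\|e_k\|$: everything hinges on the progress term --- after the $1/(qm)$ averaging it has size $\asymp q^2\|e_k\|^2/n$, coming from $c_A\asymp q^3 m/n$ --- strictly dominating the corruption contribution, of size $\asymp\beta\|e_k\|^2/n$ coming from $t_0\sqrt{\beta m\sigma_A}\asymp q\beta m/n$, so any looseness in the concentration estimates or in passing from the net to the whole sphere directly shrinks the admissible $\beta$. A secondary difficulty is that the corrupted support is adversarial but oblivious, so (P1)--(P3) must hold simultaneously for \emph{all} admissible uncorrupted sets $G$ and all corruptions, and handling this is exactly what forces the union bounds and the requirement that $m/n$ be sufficiently large.
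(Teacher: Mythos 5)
You should first be aware that this statement is Theorem~2 of the paper, quoted from Haddock--Needell--Rebrova--Swartworth \cite{hadd}; the paper under review does not prove it, so there is no in-paper proof to compare against verbatim. The closest object is the paper's own deterministic Main Result and its proof in \S 3, and your deterministic skeleton coincides with it almost lemma-for-lemma: your one-step identity averaged uniformly over the quantile set is the split into $\mathbb{E}_{i \in S}$ and $\mathbb{E}_{i \in B\setminus S}$ of \S 3.5; your (P2) is exactly $\sigma_{q-\beta,\min}(A)^2$ (Lemma~3, the Strohmer--Vershynin step); your (P3) is exactly the quantity $\sigma_{\beta,\max}(A)$ that the paper introduces in the remark after Lemma~2 as a possible sharpening (the paper itself uses the cruder $\sigma_{\max}$); and your bound $\tau_k \leq t_0\|e_k\|$ plays the role of Lemma~1 --- with the difference that the paper gets the quantile bound deterministically for \emph{every} matrix, via the $L^2$ pigeonhole $\q \leq \sigma_{\max}\|e_k\|/\sqrt{m(1-q-\beta)}$, needing only $\beta<1-q$, whereas your (P1) is an anti-concentration property that already consumes the randomness of $A$. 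The probabilistic layer you add (verifying (P1)--(P3) with failure probability $e^{-cm}$) is precisely what the paper delegates to \cite[Proposition 3.4]{hadd} and the heuristic $(\diamond)$; so your route is consistent with how such a theorem is proved, and the deterministic half is sound.

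The genuine gap is in the quantitative passage from your stated scalings to the range $\beta < c_1 q^2$. For rows uniform on $\mathbb{S}^{n-1}$, the uniform bound over all $\beta m$-row submatrices is not $\sigma_A \asymp \beta m/n$: the sum of the $\beta m$ largest of the $m$ values $\langle a_i,w\rangle^2$ already scales like $\beta \log(1/\beta)\, m/n$ for a \emph{fixed} $w$ (the tail second moment of a Gaussian), and the union bound over the $\binom{m}{\beta m}$ adversarial supports costs a factor of the same order. Tracked through your condition $c_A > \beta m\, t_0^2 + 2 t_0 \sqrt{\beta m\, \sigma_A}$, this gives $\beta\sqrt{\log(1/\beta)} \lesssim q^2$ rather than $\beta \lesssim q^2$: a positive admissible range survives (so ``$\beta$ sufficiently small'' is fine), but the precise $\min(c_1 q^2, 1-q)$ form does not follow from the sketch as written, and replacing (P3) by the full $\sigma_{\max}$ (as the paper does) makes it worse ($\beta \lesssim q^4$). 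Two smaller points: $t_0 \asymp q/\sqrt{n}$ is only the small-$q$ asymptotic of the Gaussian quantile, and (P1) with the margin $2\beta$ needs $\beta < (1-q)/2$; and you should state explicitly that (P1)--(P3) are events about $A$ of probability $1-e^{-cm}$, while the contraction $\mathbb{E}\|e_{k+1}\|^2 \leq (1-\rho)\|e_k\|^2$ then holds over the algorithm's randomness uniformly in the corruption, which is the form in which Theorem~2 is to be read. You correctly identified the uniform-over-the-sphere version of (P2) --- i.e.\ \cite[Proposition 3.4]{hadd} --- as the technical heart, but it is asserted, not proved, in your proposal.
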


The paper \cite{hadd} also demonstrates, empirically, that the method works remarkably well and can handle both substantial amounts of error and real-life data.   

\subsection{Related Results.} The $q-$quantile method is vaguely related to an earlier approach for the uncorrupted problem: given 
$\left\{ \left| \left\langle x_k, a_i \right\rangle - b_i \right| : 1 \leq i \leq m \right\},$ one could wonder whether it would not make sense to project onto the hyperplane corresponding to the equation that is violated the `most'. This is sometimes known as Motzkin's method \cite{agmon, motzkin} or the maximal correction method \cite{cenker}. This works well in practice, one observes a faster rate of convergence, and this has been investigated by  Bai \& Wu \cite{bai0, bai, bai2},  Du \& Gao \cite{du}, Gower, Molitor, Moorman and Needell \cite{gower2}, Haddock \& Ma \cite{haddock}, Haddock \& Needell \cite{haddock2}, Jiang, Wu \& Jiang \cite{jiang}, Li, Lu \& Wang \cite{lilu}, Li \& Zhang \cite{li}, Nutini, Sepehry, Laradji, Schmidt, Koepke \& Virani \cite{nutini} and the author \cite{stein2}. 
The approach proposed in \cite{hadd} is related to work of Haddock \& Needell \cite{haddock15, haddock16} where the Random Kaczmarz method was used to detect corruptions (see also \cite{hadd2}). We also refer to \cite{amaldi, jamil}.

\section{The Theorem}
\subsection{Setup.} We start with a formal description of the problem.

\begin{quote}
\textbf{Problem.}
\begin{enumerate}
\item Let $A \in \mathbb{R}^{m \times n}$ where all $m$ rows are normalized to $\|a_i\|_{\ell^2} = 1$. We want to find the solution  $x \in \mathbb{R}^n$ of $Ax = b_t \in \mathbb{R}^m$. We are only given a perturbation $b \in \mathbb{R}^m$ of $b_t$ satisfying
$$ \| b_t - b \|_{\ell^0} \leq \beta m.$$
\item Given $A$ and $b$, we want to reconstruct $x$.
\end{enumerate}
\end{quote}
Clearly, in order for a reconstruction to be possible, one will have to make some assumptions on $A$ and $\beta$. Our interest in the problem is inspired
by the $q-$quantile Random Kaczmarz method which was proposed in \cite{hadd} as a way of solving the problem. 
We first state the algorithm in an explicit form (our presentation differs slightly from the one in \cite{hadd}, these differences are immaterial and discussed in \S 4.1).
 \begin{quote}
 \textbf{Algorithm.} Given $A \in \mathbb{R}^{m \times n}$ with normalized rows, $\|a_i\|_{\ell^2} = 1$ for all $1 \leq i \leq m$, given $0 < q < 1$ and an approximate solution $x_k$:
 \begin{enumerate}
 \item Compute the $m$ numbers 
 $$ N_1 = \left\{1 \leq i \leq m: \left| \left\langle x_k, a_i \right\rangle - b_i \right| \right\}.$$
 \item Compute the $q-$quantile $Q$ of $N_1$ and consider all the equations that lie in the $q-$quantile
 $$ N_2 = \left\{ 1 \leq i \leq m: \left| \left\langle x_k, a_i \right\rangle - b_i \right| \leq Q \right\}.$$
\item  Choose an $i \in N_2$ uniformly at random and set
$$ x_{k+1} = x_k - (b_i - \left\langle a_i, x_k\right\rangle)a_i.$$
 \end{enumerate}
 \end{quote}
  
  Our goal is to provide explicit conditions on $A$ and $\beta$ under which this algorithm converges. It is clear that we have to somehow measure the `quality' of a matrix to be able to make quantitative statements: if all rows $a_i \in \mathbb{R}^n$ point in
different directions and if there are many such directions, then one could hope that one can compensate for some amount of corruption in the system. A natural example is that of random matrices with rows $a_i$ sampled uniformly at random from $\mathbb{S}^{n-1}$. A deterministic example would be given by spherical designs on $\mathbb{S}^{n-1}$ or unit norm frames with good condition number.
Conversely, if a subset of the rows $a_i$ of the matrix point in somewhat similar directions, they
capture similar aspects of the solution and a targeted corruption may prove to be more damaging. We will measure this quality using the parameter $\sigma_{q-\beta,\min}(A)$ (a version of which already appeared in \cite{hadd}) defined as
$$ \sigma_{q-\beta,\min}(A) = \min_{S \subset \left\{1,2,\dots, m\right\} \atop |S| = (q-\beta)m } \inf_{x \neq 0} \frac{\|A_S x\|}{\|x\|},$$
where $A_S$ is the matrix $A$ restricted to rows indexed by the subset $S$. This quantity measures whether restricting $A$ to a $(q-\beta)-$fraction of its rows
 can lead to a matrix with small singular values (see \S 2.3 for further comments on this quantity). Such a hypothetical
sub-matrix would correspond to a subsets of rows that capture very similar amounts of information which makes a matrix vulnerable to
corruption. We will also use the usual largest singular value of a matrix $\sigma_{\max}(A)$.

\subsection{The Result.} We can now state the main result.
\begin{thm}[Main Result] Assuming the setup described \S 2.1 and $\beta < q < 1-\beta$ arbitrary, if 
$$  \frac{q}{q-\beta}\left( \frac{2\sqrt{\beta} }{\sqrt{1 - q - \beta}} + \frac{\beta}{1 - q - \beta} \right) <  \frac{\sigma_{q-\beta,\min}^2}{ \sigma_{\max}^2}$$
then there exists $c_{A,\beta,q} > 0$ such that the $q-$quantile Random Kaczmarz method converges for \underline{\emph{all}} $\beta-$corruptions of the
linear system and
$$ \mathbb{E} \| x_{k} - x \|^2 \leq (1-c_{A, \beta, q})^k  \cdot \| x_{0} - x \|^2,$$
where
$$c_{A, \beta, q}  =(q-\beta)\frac{ \sigma_{q-\beta,\min}(A)^2}{q^2 m} -\frac{  \sigma_{\max}(A)^2}{q m} \left(\frac{2\sqrt{\beta} }{\sqrt{1 - q - \beta}} + \frac{\beta}{1 - q - \beta} \right) > 0.$$
\end{thm}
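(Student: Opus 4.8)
The plan is to track the evolution of the squared error $\|x_k - x\|^2$ through one step of the algorithm and show it contracts in expectation. Write $e_k = x_k - x$. Since the true solution satisfies $\langle a_i, x\rangle = (b_t)_i$, the residual at index $i$ is $\langle a_i, x_k\rangle - b_i = \langle a_i, e_k\rangle - ((b)_i - (b_t)_i) = \langle a_i, e_k\rangle - c_i$, where $c_i$ is the corruption (zero for all but at most $\beta m$ indices). A single Kaczmarz update using row $i$ gives $e_{k+1} = e_k - (\langle a_i, e_k\rangle - c_i) a_i$, and since $\|a_i\| = 1$ the Pythagorean identity yields $\|e_{k+1}\|^2 = \|e_k\|^2 - (\langle a_i, e_k\rangle - c_i)^2 + 2 c_i \langle a_i, e_k\rangle - c_i^2 \cdot(\ldots)$; more precisely one expands carefully to get $\|e_{k+1}\|^2 = \|e_k\|^2 - \langle a_i, e_k\rangle^2 + c_i^2$ when $c_i$ acts as the observed residual offset. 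The key point is that when $i$ is an \emph{uncorrupted} equation ($c_i = 0$) the step strictly decreases the error by $\langle a_i, e_k\rangle^2$, while a corrupted step can increase it by at most the squared residual $Q^2$, which is controlled because $i \in N_2$.

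First I would bound the number of corrupted indices that can lie in $N_2$: since $|N_2| = qm$ and at most $\beta m$ indices are corrupted, at least $(q-\beta)m$ indices in $N_2$ are clean. Averaging over the uniform choice of $i \in N_2$, the expected decrease from clean equations is at least $\frac{1}{qm}\sum_{i \in N_2 \text{ clean}} \langle a_i, e_k\rangle^2 \geq \frac{1}{qm}\min_{|S|=(q-\beta)m}\sum_{i\in S}\langle a_i, e_k\rangle^2 \geq \frac{1}{qm}\sigma_{q-\beta,\min}^2\|e_k\|^2$, using the definition of $\sigma_{q-\beta,\min}$ as the worst-case smallest singular value over $(q-\beta)m$-row submatrices — this is exactly why that quantity is the right object. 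Simultaneously the expected increase from corrupted equations is at most $\frac{\beta m}{qm} Q^2$, where $Q$ is the $q$-quantile threshold. The crux is therefore to bound $Q$ in terms of $\|e_k\|$.

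The main obstacle is estimating the quantile $Q$. Here is the idea: $Q$ is the $(qm)$-th smallest value of $\{|\langle a_i, e_k\rangle - c_i|\}$. Among the $m$ indices, at least $m - \beta m$ are clean, and for clean indices the residual is just $|\langle a_i, e_k\rangle|$. If $Q$ were large, then at least $(1-q)m$ clean indices would have $|\langle a_i, e_k\rangle| \geq Q$ (since the $qm$ with residual $\le Q$ leave $(1-q)m$ above, at most $\beta m$ of which could be corrupted ones that "should" have been below — one argues that at least $(1-q-\beta)m$ clean indices have $|\langle a_i, e_k\rangle| \ge Q$). But $\sum_i \langle a_i, e_k\rangle^2 = \|Ae_k\|^2 \le \sigma_{\max}^2 \|e_k\|^2$, so the number of indices with $|\langle a_i, e_k\rangle| \ge Q$ is at most $\sigma_{\max}^2\|e_k\|^2/Q^2$. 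Combining, $(1-q-\beta)m \le \sigma_{\max}^2\|e_k\|^2/Q^2$, hence $Q^2 \le \sigma_{\max}^2\|e_k\|^2/((1-q-\beta)m)$. This is where the factor $1-q-\beta$ and the hypothesis $q < 1-\beta$ enter.

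Finally I would assemble: there is a subtlety in that a corrupted step changes $e_k$ not just by adding $Q^2$ but by a cross term, because the update direction is $a_i$ and the offset is the \emph{observed} residual $\langle a_i, e_k\rangle - c_i$, so $\|e_{k+1}\|^2 - \|e_k\|^2 = -(\langle a_i,e_k\rangle - c_i)^2 + \text{(term from } c_i\text{)}$; carefully, writing $r_i = \langle a_i, e_k\rangle - c_i$ for the observed residual we get $e_{k+1} = e_k - r_i a_i$, so $\|e_{k+1}\|^2 = \|e_k\|^2 - 2 r_i \langle a_i, e_k\rangle + r_i^2 = \|e_k\|^2 - r_i^2 - 2 r_i c_i$. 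For clean $i$ this is $\|e_k\|^2 - \langle a_i, e_k\rangle^2$; for corrupted $i$, bounding $|r_i| \le Q$ and $|\langle a_i, e_k\rangle| \le \sigma_{\max}\|e_k\|$ gives an increase of at most $Q^2 + 2Q\sigma_{\max}\|e_k\| \le \frac{\sigma_{\max}^2\|e_k\|^2}{(1-q-\beta)m} + \frac{2\sigma_{\max}^2\|e_k\|^2\sqrt{\beta}}{\sqrt{1-q-\beta}\,m}$ after inserting the quantile bound (the $\sqrt\beta$ appears because one compares the at-most-$\beta m$ corrupted terms against the sum bound). Taking expectation over $i \in N_2$ uniform, collecting the $\frac{1}{qm}$ and $\frac{1}{q^2 m}$ normalizations, and factoring out $\|e_k\|^2$ yields precisely $\mathbb{E}\|e_{k+1}\|^2 \le (1 - c_{A,\beta,q})\|e_k\|^2$ with the stated constant; the displayed inequality in the hypothesis is exactly the condition $c_{A,\beta,q} > 0$. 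Iterating over $k$ and using the tower property gives the claim. The delicate bookkeeping is in step three — turning a statement about ranks/quantiles into the clean bound on $Q$ — and in not losing constants when passing between $\ell^0$, $\ell^2$, and quantile descriptions of the corruption.
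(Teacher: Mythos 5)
Your overall architecture matches the paper's: the quantile bound (your third paragraph is essentially the paper's Lemma~1, proved the same way), the split of the quantile set into corrupted and clean indices with the corrupted ones numbering at most $\beta m$, the lower bound on the clean contribution via $\sigma_{q-\beta,\min}$, and the final assembly. (Your first displayed identity $\|e_{k+1}\|^2 = \|e_k\|^2 - \langle a_i,e_k\rangle^2 + c_i^2$ is wrong as stated, but you later replace it by the correct expansion $\|e_{k+1}\|^2 = \|e_k\|^2 - r_i^2 - 2r_i c_i$, equivalently $\|e_k\|^2 + r_i^2 - 2r_i\langle a_i,e_k\rangle$, so that is not the problem.)

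The genuine gap is in the corrupted cross-term. You bound it pointwise: $|r_i|\le Q$ and $|\langle a_i,e_k\rangle|\le \sigma_{\max}\|e_k\|$, and then assert $2Q\sigma_{\max}\|e_k\| \le \frac{2\sigma_{\max}^2\|e_k\|^2\sqrt{\beta}}{\sqrt{1-q-\beta}\,m}$. The quantile bound only gives $Q \le \frac{\sigma_{\max}\|e_k\|}{\sqrt{m}\sqrt{1-q-\beta}}$, so your asserted inequality overshoots by a factor of order $\sqrt{m/\beta}$; the parenthetical ``the $\sqrt\beta$ appears because one compares the corrupted terms against the sum bound'' is exactly the step that is missing. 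A pointwise estimate, after multiplying by the selection probability $\le \beta/q$, yields an expected increase of order $\frac{\beta}{q}\cdot\frac{\sigma_{\max}\|e_k\|^2}{\sqrt{m}}$, i.e.\ a term scaling like $\beta/\sqrt{m}$ rather than the required $\sqrt{\beta}/m$; comparing with the clean decrease of order $\sigma_{q-\beta,\min}^2/(qm)$, this only proves convergence when $\beta \lesssim 1/\sqrt{m}$ (a vanishing fraction, essentially $O(\sqrt m)$ corrupted rows), not the stated constant-fraction result. The paper's Lemma~2 gets the $\sqrt{\beta}$ by \emph{not} arguing index-by-index: conditioning on having picked a corrupted index and averaging over the set $S$ of corrupted indices inside the quantile, one applies Cauchy--Schwarz to the whole sum,
\begin{align*}
\frac{1}{|S|}\sum_{i\in S} |r_i|\,|\langle a_i,e_k\rangle|
\;\le\; \frac{1}{\sqrt{|S|}}\Bigl(\sum_{i\in S} r_i^2\Bigr)^{1/2}\frac{1}{\sqrt{|S|}}\Bigl(\sum_{i\in S}\langle a_i,e_k\rangle^2\Bigr)^{1/2}
\;\le\; \frac{Q\,\sigma_{\max}\|e_k\|}{\sqrt{|S|}},
\end{align*}
using $\sum_{i\in S}\langle a_i,e_k\rangle^2\le \|Ae_k\|^2\le \sigma_{\max}^2\|e_k\|^2$ collectively; multiplying by the selection probability $|S|/(qm)$ then produces $\sqrt{|S|}\le\sqrt{\beta m}$, and inserting the quantile bound gives precisely the term $\frac{2\sqrt{\beta}\,\sigma_{\max}^2}{qm\sqrt{1-q-\beta}}$ in $c_{A,\beta,q}$. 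Without this collective Cauchy--Schwarz step your argument does not reach the stated constant or the stated corruption regime.
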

\vspace{7pt}
\textbf{Remarks.} 
\begin{enumerate}
\item
This is presumably not the optimal condition and not the optimal constant and it would be interesting to have sharper results (see also \S 2.3).
\item The algebraic structure of the condition requires $\beta < 1-q$. The condition $\sigma_{q-\beta,\min}(A) > 0$ requires
$(q-\beta)m \geq n$ and thus $\beta < q$ as well as $m > n$: the result (unsurprisingly) only applies to overdetermined systems.
\item One could slightly relax the condition on $ \sigma_{q-\beta,\min}(A)$: it would suffice to look at submatrices indexed by
rows corresponding to equations that have \textit{not} been corrupted, this is a smaller set and thus leads to a larger value for
this modified smallest singular value.
\item One could consider analogous methods for matrices without the normalization $\|a_i\|=1$, we refer to \S 4.3. More generally, there are a number of variations on the method that one could consider (different selection probabilities, for example, see \S 4.4).
\item Our result shows convergence under \textit{all} $\beta-$corruptions, i.e. all corruptions with $\| b-b_t\|_{\ell^0} \leq \beta m$. For
practical applications, it may be interesting to restrict to random $\beta-$corruptions. Naturally, one would expect stronger results in such a relaxed setting.
\end{enumerate}

\subsection{Regarding $\sigma_{q-\beta,\min}(A)$.} The strength of the result hinges on
$$ \sigma_{q-\beta,\min}(A) = \min_{S \subset \left\{1,2,\dots, m\right\} \atop |S| = (q-\beta)m } \inf_{x \neq 0} \frac{\|A_S x\|}{\|x\|}.$$
If $ \sigma_{q-\beta,\min}(A)$ is too small, then it will not allow for any nontrivial result since $\beta < 1/m$ would mean that not a single
equation can be corrupted. 
Computing $ \sigma_{q-\beta,\min}(A) $ for an explicitly given matrix $A$ might be somewhat difficult, however, at least for certain types of random matrices one can hope to get a decent understanding.
For many random matrices with $\|a_i\|=1$ one would expect
$$ \mathbb{E} ~\sigma_{\max}(A) \sim \sqrt{ \frac{m}{n}}$$
and in combination with the trivial inequality $\sigma_{q-\beta,\min}(A) \leq \sigma_{\max}(A)$ one can get a first idea of how things should scale.
Haddock, Needell, Rebrova \& Swartworth 
\cite[Proposition 3.4]{hadd} show that for a suitable class of random matrices
$$ \sigma_{q-\beta,\min}(A) \gtrsim (q-\beta)^{3/2} \sqrt{\frac{m}{n}} \qquad \mbox{with high likelihood}$$
which shows that it is comparable to $\sigma_{\max}(A)$ up to constants depending on $q,\beta$. 
Let us now specialize to the case where $A \in \mathbb{R}^{m \times n}$ has each row sampled uniformly at random from the surface measure of $\mathbb{S}^{n-1}$ and suppose that the matrix is large, $m, n \gg 1$, and that the ratio $m/n$ is large. 
Trying to find a subset $S \subset \left\{1,2,\dots, m\right\}$ such that $A_S$ has a small singular value might be difficult, however, we can turn the question around: for a given $x \in \mathbb{S}^{n-1}$, how would we choose $S$ to have 
$$ \|A_S x \|^2 = \sum_{i \in S} \left\langle x, a_i \right\rangle^2 \qquad \mbox{as small as possible?}$$
This is easy: we compute $\left\langle x, a_i\right\rangle^2$ for $1 \leq i \leq m$ and pick $S$ to be the set of desired size corresponding to the smallest of these numbers. Using rotational invariance of Gaussian vectors, we can suppose that $x = (1,0,\dots, 0)$. Then we expect, in high dimensions, that
$$ \left\langle a_i, x\right\rangle \sim \frac{1}{\sqrt{n}} \gamma \qquad \mbox{where}~\gamma \sim \mathcal{N}(0,1).$$
\vspace{-20pt}
\begin{center}
\begin{figure}[h!]
\begin{tikzpicture}[scale=0.6]
\filldraw (2,1) circle (0.05cm);
\shade[ball color=black!50!black] (4,0) arc(0:-180:4 and 1.5) to[out=80,in=100] cycle;
\clip plot[variable=\t,domain=0:360,samples=120] ({1.4+cos(15)*1.7*cos(\t)+sin(15)*0.6*sin(\t)},
{1.4+cos(15)*0.6*sin(\t)-sin(15)*1.7*cos(\t)});
\fill[white] plot[variable=\t,domain=0:360,samples=120] ({1.4+cos(15)*1.7*cos(\t)+sin(15)*0.6*sin(\t)},
{1.4+cos(15)*0.6*sin(\t)-sin(15)*1.7*cos(\t)});
\filldraw (1.5,1.4) circle (0.07cm);
\node at (1.9, 1.4) {$x$};
\end{tikzpicture}
\caption{Removing a small spherical cap around the vector $x$.}
\label{fig:cap}
\end{figure}
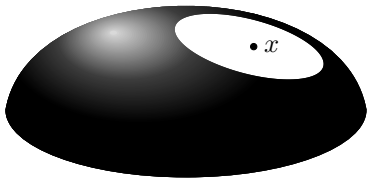
\end{center}

This suggest a certain picture: large inner products are those where many rows $a_i$ are nicely aligned with $x$ and we know with which likelihood to expect them (these are just all the points in the two spherical caps centered at $x$ and $-x$). This would then suggest that, in the limit as $m,n,m/n \rightarrow \infty$, we have
$$ \qquad \qquad \qquad \frac{\sigma_{q-\beta,\min}^2(A)}{ \sigma_{\max}^2(A)} = \frac{1}{\sqrt{2\pi}} \int_{-\alpha}^{\alpha} e^{-x^2/2} x^2 ~dx \qquad \qquad (\diamond)$$
where the parameter $\alpha$ is implicitly defined via
$$  \frac{1}{\sqrt{2\pi}} \int_{-\alpha}^{\alpha}e^{-x^2/2}  dx = q-\beta.$$
It would be interesting to understand whether this, if true, could be rigorously proven. Another interesting question is whether such an asymptotic, if true, could be made quantitative in terms of $m,n, m/n$. A third interesting question is whether for all matrices with $\|a_i\|=1$ there is an inequality of the type
$$ \sigma_{q-\beta,\min}(A)  \leq (1+ o(1)) \cdot c_{q-\beta} \cdot \sqrt{\frac{m}{n}}$$
and whether the best constant $c_{q- \beta}$ is given by the heuristic $(\diamond)$. \\

Independently of these questions, we have the following Corollary. 
\begin{corollary} Let $A \in \mathbb{R}^{m \times n}$ have rows sampled independently and uniformly from $\mathbb{S}^{n-1}$ and $m,n,m/n  \gg 1$ and $0<q<1$. There exists $\delta > 0$ (depending only on $q$) such that the $q-$quantile Random Kaczmarz method converges for $\beta \leq \delta$ (with high probability with respect to $A$).
Assuming $(\diamond)$ and $q =0.88$, we have $\delta \geq 0.0056$.
\end{corollary}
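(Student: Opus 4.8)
The plan is to verify the hypothesis of the Main Result for all sufficiently small $\beta$, using what is known about the two relevant singular values of a random matrix with rows on $\mathbb{S}^{n-1}$. Abbreviate the left-hand side of the condition in the Main Result by
$$ L(\beta,q) \;:=\; \frac{q}{q-\beta}\left( \frac{2\sqrt{\beta}}{\sqrt{1-q-\beta}} + \frac{\beta}{1-q-\beta} \right). $$
On the interval $0 \le \beta \le \tfrac12\min(q,1-q)$ the function $L(\cdot,q)$ is continuous and strictly increasing and $L(0,q)=0$, so $L(\beta,q)\to 0$ as $\beta\to 0^{+}$; hence it suffices to keep the right-hand side $\sigma_{q-\beta,\min}^2/\sigma_{\max}^2$ bounded below by a constant that depends only on $q$.

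I would control the two singular values separately, each with high probability, and then intersect the events. For the small one, \cite[Proposition 3.4]{hadd} gives $\sigma_{q-\beta,\min}(A)^2 \gtrsim (q-\beta)^3(m/n)$; since $q-\beta\ge q/2$ in our range this is $\gtrsim (q/2)^3(m/n)$. For the large one, $\|A\|_F^2=m$ gives the trivial $\sigma_{\max}(A)^2\ge m/n$, while a standard bound for random matrices with independent isotropic rows (applied to $\sqrt n\,A$, whose rows behave in high dimension like i.i.d.\ standard Gaussian vectors) gives $\sigma_{\max}(A)^2 \le (1+o(1))\,m/n$ when $m/n\gg1$. On the intersection of these events,
$$ \frac{\sigma_{q-\beta,\min}^2}{\sigma_{\max}^2} \;\gtrsim\; (q-\beta)^3 \;\ge\; (q/2)^3 \;=:\; c(q) \;>\; 0. $$
By continuity and monotonicity of $L(\cdot,q)$ there is a threshold $\delta=\delta(q)\in(0,\tfrac12\min(q,1-q)]$ with $L(\beta,q)<c(q)$ for all $\beta<\delta$; note such $\beta$ also satisfies the algebraic constraints $\beta<q$ and $\beta<1-q$ of the theorem. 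For $\beta<\delta$ the hypothesis of the Main Result holds with high probability, and the Main Result then gives the exponential decay of $\mathbb{E}\,\|x_k-x\|^2$. This proves the qualitative statement.

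For the explicit bound, the assumption $(\diamond)$ lets us replace the crude estimate by the exact asymptotic
$$ \frac{\sigma_{q-\beta,\min}^2}{\sigma_{\max}^2} \;=\; \frac{1}{\sqrt{2\pi}}\int_{-\alpha}^{\alpha} e^{-x^2/2}x^2\,dx \;=\; (q-\beta) - 2\alpha\,\phi(\alpha), \qquad \phi(\alpha):=\frac{1}{\sqrt{2\pi}}e^{-\alpha^2/2}, $$
where $\alpha$ solves $(2\pi)^{-1/2}\int_{-\alpha}^{\alpha}e^{-x^2/2}\,dx=q-\beta$ and the second equality is integration by parts. One then maximizes $\beta$ subject to $L(\beta,q)<(q-\beta)-2\alpha\phi(\alpha)$. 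For $q=0.88$ and $\beta=0.0056$ one has $q-\beta=0.8744$, hence $\alpha=\Phi^{-1}(0.9372)\approx1.532$, so the right-hand side is $\approx 0.8744-2(1.532)(0.1234)\approx0.496$, while $L(0.0056,0.88)\approx0.495$; the inequality holds, giving $\delta\ge0.0056$. A slightly larger value (near $\beta=0.006$) already reverses it, so under $(\diamond)$ this is essentially the true threshold.

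The main obstacle is the probabilistic input: one needs the lower bound on $\sigma_{q-\beta,\min}(A)$ from \cite{hadd} and a matching upper bound on $\sigma_{\max}(A)$ to hold \emph{simultaneously} with high probability, with constants that do not degrade as $\beta\to0$ (uniformly for $\beta\le\tfrac12\min(q,1-q)$); this is exactly the place where $m/n\gg1$ is needed, since it is what makes $\sigma_{\max}(A)^2$ genuinely close to $m/n$ rather than to $(1+\sqrt{n/m})^2\,m/n$. Everything past that point is the one-dimensional monotonicity argument above, together with — for the numerical value — an elementary optimization under the heuristic $(\diamond)$.
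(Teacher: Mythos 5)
Your proposal is correct and follows essentially the same route as the paper: verify the Main Theorem's hypothesis by noting the left-hand side $L(\beta,q)$ vanishes as $\beta\to 0$ while the ratio $\sigma_{q-\beta,\min}^2/\sigma_{\max}^2$ stays bounded below (via \cite[Proposition 3.4]{hadd} together with $\sigma_{\max}^2\sim m/n$), and then, under $(\diamond)$ with $q=0.88$, check the inequality numerically to obtain $\beta<0.0056$. Your explicit integration-by-parts formula $(q-\beta)-2\alpha\phi(\alpha)$ and the worked numerics simply make precise the ``minor computations'' the paper alludes to, and they confirm its threshold.
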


 We emphasize that that this estimate hinges on $(\diamond)$ being correct; however, if $(\diamond)$ were incorrect, then the argument could also be run with another scaling to produce another number.  The fact that there exists such a positive number $\delta>0$ follows quickly from the Main Theorem and \cite[Proposition 3.4]{hadd}. 
 We believe that this Question is interesting in its own right.
\begin{quote}
\textbf{Question 1.} What is the maximum percentage of corruption that the quantile Random Kaczmarz method can absorb for tall Gaussian Random matrices in the asymptotic limit $m,n,m/n \rightarrow \infty$? Is it $(50 - \varepsilon)\%$ or is there a barrier before that?
\end{quote}
The difficulty comes from the fact that the $\beta-$perturbations can be completely arbitrary and adversarial. Maybe there are particular types of corruption effectively exploiting certain idiosyncrasies of the Random Kaczmarz method?  There is a second side to the story: if we assume that the perturbation itself is random (in the sense that the error is supported on $\beta m$ entries but that these entries, interpreted as a vector in $\mathbb{R}^{\beta m}$ are, say, a rescaled Gaussian), then one would naturally expect the $q-$quantile Random Kaczmarz method to be even more effective (since the error cannot effectively conspire against the method).
\begin{quote}
\textbf{Question 2.} What is the maximum percentage of \textit{random} corruption that the quantile Random Kaczmarz method can absorb for Gaussian Random matrices? Can one break the $50\%$ barrier?
\end{quote}
We remark that the numerical evidence in \cite{hadd} (obtained using random perturbations) indicates that the method is actually remarkably stable for such random perturbations even for relatively large amounts of error, even, say, $\beta = 0.5$. It would be tempting to hope that the random case actually allows for $(100-\varepsilon)\%$ corruption (provided the matrix is sufficiently large depending on $\varepsilon$): perhaps $\varepsilon \%$ of consistent structure is actually sufficient to recover the ground truth against $(100-\varepsilon)\%$ of unstructured noise? 

\section{Proof}

\subsection{Outline.} The underlying idea is as follows: we try to bound
$$ \mathbb{E}~ \|x_{k+1} - x\|^2 \qquad \mbox{in terms of} \qquad \| x_k - x\|^2.$$
There are only two cases: when computing $x_{k+1}$ from $x_k$ we either pick an uncorrupted equation or we pick a corrupted equation. When picking an uncorrupted equation, we are in
the classical setting studied by Strohmer \& Vershynin \cite{strohmer} and their argument applies. If we pick a corrupted equation,
then $\|x_{k+1} - x\|^2$ may be larger than $\| x_k - x\|^2$ and our goal is to show that it is not much larger. Finally, we will argue that for a suitable choice of parameters, the expected increase
in size is dominated by the expected decrease coming and this will conclude the result.
The proof decouples into several steps.
\begin{itemize}
\item \S 3.2 gives a bound on the $q-$quantile.
\item \S 3.3 proves Lemma 2, an upper bound on $ \mathbb{E}~\|x_{k+1} - x\|^2$ in terms of $\|x_k - x\|^2$ conditional on having picked a corrupted equation from the $q-$quantile.
\item \S 3.4 rephrases the Strohmer-Vershynin bound for our setting.
\item \S 3.5 combines all ingredients to prove an upper bound on $\mathbb{E} \|x_{k+1} - x\|^2$. 
\item We conclude with several comments and remarks in \S 4.
\end{itemize}

\subsection{A quantile bound} Assume $A \in \mathbb{R}^{m \times n}$ and the underlying equation is 
$$ Ax = b_t,$$
where the subscript $t$ is meant to indicate the true right hand side. We only have access to $b \in \mathbb{R}^m$ which
differs from $b_t$ in at most $\beta m$ entries.
$ \q (y_i)_{i=1}^{m}$
will denote the $q-$th quantile of $m$ real numbers $y_1, \dots, y_m$. 

\begin{lemma}
Let $0 < q < 1-\beta$, let $x_k \in \mathbb{R}^n$ be arbitrary, let $Ax = b_{t}$ and suppose $\| b - b_t\|_{\ell^0} \leq \beta m$. Then
$$ \q\left(  \left| \left\langle x_k, a_i \right\rangle - b_i \right|  \right)_{i=1}^{m} \leq \frac{\sigma_{\max}}{\sqrt{m} \sqrt{1 - q - \beta}} \|x_k - x\|.$$
\end{lemma}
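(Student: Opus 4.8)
The plan is to express everything in terms of the error vector $v = x_k - x$ and to exploit that on \emph{uncorrupted} equations the residual is exactly a linear functional of $v$. Indeed, if the $i$-th equation has not been corrupted, then $b_i = (b_t)_i = \langle x, a_i\rangle$, so
$$ \left| \langle x_k, a_i\rangle - b_i \right| = \left| \langle x_k - x, a_i\rangle \right| = \left| \langle v, a_i\rangle \right|. $$
Let $U \subseteq \{1,\dots,m\}$ be the set of uncorrupted indices; by hypothesis $|U| \geq (1-\beta)m$.

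First I would bound the total energy of these residuals. Since $A_U$ is a submatrix of $A$,
$$ \sum_{i \in U} \langle v, a_i\rangle^2 \;\leq\; \sum_{i=1}^{m} \langle v, a_i\rangle^2 \;=\; \|Av\|^2 \;\leq\; \sigma_{\max}^2 \, \|v\|^2. $$
A Markov/pigeonhole step then shows that the residual cannot be large on too many uncorrupted equations: for any $t > 0$, the number of $i \in U$ with $\langle v, a_i\rangle^2 > t$ is at most $\sigma_{\max}^2\|v\|^2 / t$, so the number of $i \in U$ with $|\langle v, a_i\rangle| \le \sqrt t$ — equivalently $|\langle x_k,a_i\rangle - b_i| \le \sqrt t$ — is at least $(1-\beta)m - \sigma_{\max}^2\|v\|^2/t$.

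Finally I would choose the threshold $t$ so that this count reaches $qm$: setting $t = \sigma_{\max}^2 \|v\|^2 / ((1-q-\beta)m)$ yields
$$ (1-\beta)m - \frac{\sigma_{\max}^2 \|v\|^2}{t} \;=\; (1-\beta)m - (1-q-\beta)m \;=\; qm, $$
where the assumption $q < 1-\beta$ is exactly what guarantees that $t$ is positive and finite. Hence at least $qm$ of the $m$ numbers $|\langle x_k, a_i\rangle - b_i|$ do not exceed $\sqrt t = \sigma_{\max}\|v\| / \sqrt{(1-q-\beta)m}$, and therefore the $q$-quantile is at most this value, which is the claim. The only point needing care is fixing a precise convention for the $q$-quantile of $m$ numbers (which order statistic is meant, and rounding when $qm \notin \mathbb{Z}$); this introduces only harmless off-by-one adjustments, so I do not anticipate a genuine obstacle here.
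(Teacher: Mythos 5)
Your proposal is correct and is essentially the paper's argument: both bound the sum of squared residuals over the uncorrupted equations by $\sigma_{\max}^2\|x_k-x\|^2$ and then count, your Markov/pigeonhole step being just the contrapositive phrasing of the paper's observation that at least $(1-q-\beta)m$ uncorrupted residuals would exceed a large quantile value. The quantile-convention caveat you flag is likewise harmless in the paper's treatment.
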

\begin{proof} We index the corrupted equations using $C \subset \left\{1,2,\dots,m\right\}$. Note that $|C| \leq \beta m$ by assumption. We will now consider
the set of uncorrupted equations and note that they satisfy $A_{\notin C} x = b_{\notin C}$ (since $b_{\notin C} = (b_t)_{\notin C}$). 
We start with
$$
  \sum_{i=1 \atop i \notin C}^{m} (\left\langle a_i, x_k \right\rangle- b_i)^2  = \|A_{\notin C} x_k - b_{\notin C}\|^2 = \|A_{\notin C} x_k -  A_{\notin C} x\|^2.
 $$
 This can be bounded from above by
\begin{align*}
 \|A_{\notin C} x_k -  A_{\notin C} x\|^2  &\leq \|A_{\notin C}\|^2\cdot\| x_k -x\|^2  \\
 &\leq \|A_{}\|^2\cdot\| x_k -x\|^2 = \sigma_{\max}^2 \|x_k - x\|^2.
 \end{align*}
Suppose now that 
$$\alpha =  \q\left(  \left| \left\langle x_k, a_i \right\rangle - b_i \right|  \right)_{i=1}^{m} \qquad \mbox{is large}.$$
Then at least $(1-q)m$ of the $m$ numbers $\left| \left\langle x_k, a_i \right\rangle - b_i \right|_{i=1}^{m}$ are at least $\alpha$ and at least $(1-q)m - \beta m$  belong to equations that have not been corrupted. Then
$$ m \left(1 - q - \beta\right) \alpha^2 \leq   \sum_{i \notin C}^{} (\left\langle a_i, x_k \right\rangle- b_i)^2 \leq \sigma_{\max}^2 \|x_k - x\|^2.$$
and therefore
$$ \q\left(  \left| \left\langle x_k, a_i \right\rangle - b_i \right|  \right)_{i=1}^{m} \leq \frac{\sigma_{\max}}{\sqrt{m} \sqrt{1 - q - \beta}} \|x_k - x\|.$$
\end{proof}
\textbf{Remark.} We note that the estimate
$ \| A_{\notin C} \| \leq \|A\|$
is clearly lossy. It would be interesting whether this could be further exploited to get improved estimates.  

\subsection{Bounding Corrupted Equations}
The next step is to provide an upper bound on $\|x_{k+1} - x\|^2$ assuming that we select a corrupted equation. Let us fix $x_k$ and introduce the subset 
$ S \subset C$ of corrupted equations that are simultaneously `almost-correct' equations in the sense of being in the $q-$quantile
$$ S = \left\{i \in C:  \left| \left\langle x_k, a_i \right\rangle - b_i \right|  \leq \q\left(  \left| \left\langle x_k, a_i \right\rangle - b_i \right|  \right)_{i=1}^{m} \right\}.$$
$S$ is the set of all corrupted equations which end up in the set of $q m $ equations that are being considered. If $S = \emptyset$, then when computing $x_{k+1}$ from $x_k$, we have to use an uncorrupted equation and can go straight to \S 3.4. We can thus assume without loss of generality that $|S| \geq 1$. Naturally, we also have $|S| \leq |C| \leq \beta m$. If $i \in \left\{1,2,\dots,m\right\} \setminus S$, then $\|x_{k+1} - x\| \leq \|x_{k} - x\|$, this is simply the standard geometry underlying the Kaczmarz method (or: the Pythagorean theorem). If $i \in S$, then this is no longer true, the distance to the true solution may actually increase. The purpose of this section is to prove that it does not increase too much.
We abbreviate, also throughout the rest of the paper,
$$ \mathbb{E}_{j \in X} Z = \frac{1}{|X|} \sum_{j \in X} Z(j).$$
\begin{lemma} We have
  $$ \mathbb{E}_{i \in S}  \left\| x_{k+1} - x \right\|^2 \leq \left(1 +     \frac{\sigma_{\max}^2}{\sqrt{|S|} \sqrt{m}} \left(\frac{2}{\sqrt{1 - q - \beta}} + \frac{\sqrt{\beta}}{1 - q - \beta} \right)
\right) \|x_k - x\|^2.$$
 \end{lemma}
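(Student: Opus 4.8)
The plan is to compute the effect of one Kaczmarz step exactly and then average over $i \in S$. Fix $x_k$ and write $d = x_k - x$. For $i \in S$ let $r_i = b_i - \langle a_i, x_k\rangle$ be the residual against the (possibly corrupted) right-hand side, and note that because $Ax = b_t$ we always have $\langle a_i, d\rangle = \langle a_i, x_k - x\rangle = \langle a_i, x_k\rangle - (b_t)_i$, a quantity that is meaningful whether or not equation $i$ has been corrupted. Since $\|a_i\| = 1$, the update rule yields $x_{k+1} - x = d \pm r_i a_i$, and therefore
$$ \|x_{k+1} - x\|^2 = \|d\|^2 + r_i^2 \pm 2 r_i \langle a_i, d\rangle. $$
So the problem reduces to bounding $\mathbb{E}_{i \in S}\, r_i^2$ and $\mathbb{E}_{i \in S}\, r_i \langle a_i, d\rangle$.

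For the first term, the point of the set $S$ is that every $i \in S$ lies in the $q$-quantile, so $|r_i| \le Q$, where $Q = \q(|\langle x_k, a_j\rangle - b_j|)_{j=1}^m$; hence $\mathbb{E}_{i \in S} r_i^2 \le Q^2$ and $\sum_{i \in S} r_i^2 \le |S| Q^2$. For the cross term I would apply the Cauchy--Schwarz inequality over the index set $S$,
$$ \Big| \tfrac{1}{|S|} \sum_{i \in S} r_i \langle a_i, d\rangle \Big| \le \tfrac{1}{|S|} \Big( \sum_{i \in S} r_i^2 \Big)^{1/2} \Big( \sum_{i \in S} \langle a_i, d\rangle^2 \Big)^{1/2} \le \tfrac{1}{|S|}\sqrt{|S|}\, Q \cdot \|A_S d\| \le \frac{Q\, \sigma_{\max}}{\sqrt{|S|}} \|d\|, $$
using $\sum_{i \in S} \langle a_i, d\rangle^2 = \|A_S d\|^2 \le \|A d\|^2 \le \sigma_{\max}^2 \|d\|^2$ since deleting rows cannot increase the operator norm. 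Averaging the displayed identity over $i \in S$ then gives
$$ \mathbb{E}_{i \in S} \|x_{k+1} - x\|^2 \le \|d\|^2 + Q^2 + \frac{2 Q\, \sigma_{\max}}{\sqrt{|S|}} \|d\|. $$

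Finally I would substitute the quantile bound from Lemma 1, $Q \le \sigma_{\max}\|d\| / (\sqrt{m}\sqrt{1-q-\beta})$, into both error terms: the cross term becomes $2\sigma_{\max}^2 \|d\|^2 / (\sqrt m \sqrt{|S|} \sqrt{1-q-\beta})$, which already has the shape in the statement, and $Q^2$ becomes $\sigma_{\max}^2\|d\|^2 / (m(1-q-\beta))$. To fold the latter into the claimed form I would use $|S| \le |C| \le \beta m$, equivalently $\sqrt{|S|} \le \sqrt{\beta}\,\sqrt{m}$, hence $1/m \le \sqrt{\beta}/(\sqrt m \sqrt{|S|})$; pulling out the common factor $\sigma_{\max}^2/(\sqrt{|S|}\sqrt m)$ then produces exactly the asserted inequality. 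I do not anticipate a real obstacle: the whole argument is the one-line identity for $\|x_{k+1}-x\|^2$ together with the quantile constraint $|r_i| \le Q$, and the only point requiring a little care is arranging the arithmetic so that the residual $Q^2$ term is absorbed, via $|S| \le \beta m$, into precisely the $\sqrt\beta/(1-q-\beta)$ summand appearing in the lemma.
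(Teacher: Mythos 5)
Your proposal is correct and follows essentially the same route as the paper: expand $\|x_{k+1}-x\|^2$ into $\|d\|^2$ plus a cross term and a squared-residual term, bound the residual pointwise by the quantile $Q$ and invoke Lemma 1, control the cross term by Cauchy--Schwarz over $S$ together with $\|A_S d\|\le\sigma_{\max}\|d\|$, and absorb the $Q^2$ term using $|S|\le\beta m$. The only cosmetic difference is the order in which Cauchy--Schwarz and the bound $|r_i|\le Q$ are applied, which yields the identical estimate.
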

\begin{proof}
We have, recalling the normalization $\|a_i\|=1$,
$$ x_{k+1} = x_k +  (b_i - \left\langle x_k, a_i \right\rangle  )  a_i.$$
 For any arbitrary vector $v \in \mathbb{R}^n$
$$ \left\| x_k + v - x \right\|^2 =  \left\| x_k - x \right\|^2 + 2 \left\langle x_k - x, v \right\rangle + \|v\|^2$$
and we will apply this to the special choice
$$ v = (b_i - \left\langle x_k, a_i \right\rangle )  a_i \qquad \mbox{where}~i \in S.$$
We first observe that, for $i \in S$, the term $\|v\|^2$ is uniformly small since
\begin{align*}
\|v\|^2 &= \left\|  (b_i - \left\langle x_k, a_i \right\rangle  )  a_i  \right\|^2 = | b_i - \left\langle x_k, a_i \right\rangle  |^2 \\
&\leq \q\left(  \left| \left\langle x_k, a_i \right\rangle - b_i \right|^2  \right)_{i=1}^{m} \leq \frac{\sigma_{\max}^2}{m \left(1-q - \beta\right)} \|x_k - x\|^2.
\end{align*}
It remains to bound $\mathbb{E}_{i \in S}~ 2 \left\langle x_k, v \right\rangle$.
Using the same inequality with Cauchy-Schwarz
\begin{align*}
\mathbb{E}_{i \in S} ~ 2 \left\langle x_k - x, v \right\rangle &=   \frac{2}{|S|} \sum_{i \in S}  \left\langle x_k - x,(b_i - \left\langle x_k, a_i \right\rangle  )  a_i  \right\rangle \\
&=  \frac{2}{|S|} \sum_{i \in S}  (b_i -  \left\langle x_k, a_i \right\rangle)  \left\langle x_k - x, a_i \right\rangle  \\
&\leq \frac{2}{\sqrt{|S|}} \left( \sum_{i \in S}  (b_i -  \left\langle x_k, a_i \right\rangle )^2  \left\langle x_k - x, a_i \right\rangle^2 \right)^{\frac{1}{2}} \\
&\leq  \frac{2}{\sqrt{|S|}} \frac{\sigma_{\max} \cdot \|x_k - x\|}{\sqrt{m} \sqrt{1 - q - \beta}}  \left( \sum_{i \in S}   \left\langle x_k - x, a_i \right\rangle^2 \right)^{\frac{1}{2}}.
\end{align*}
At this point, we estimate
$$ \sum_{i \in S}   \left\langle x_k - x, a_i \right\rangle^2  \leq \sum_{i = 1}^{m}   \left\langle x_k - x, a_i \right\rangle^2  = \|A(x_k-x)\|^2 \leq \sigma_{\max}^2 \|x_k - x\|^2$$
and hence
 $$ \mathbb{E}_{i \in S}~ 2  \left\langle x_k - x, v \right\rangle \leq \frac{2}{\sqrt{|S|}} \frac{\sigma_{\max}^2\cdot \|x_k - x\|^2}{\sqrt{m} \sqrt{1 - q - \beta}} .$$
 Summing up now shows that
 $$ \mathbb{E}_{i \in S}  \left\| x_{k+1} - x \right\|^2 \leq \left(1 + \frac{2}{\sqrt{|S|}} \frac{\sigma_{\max}^2}{\sqrt{m} \sqrt{1-q - \beta}} +  \frac{\sigma_{\max}^2}{m \left(1 - q - \beta\right)} \right) \|x_k - x\|^2.$$
The inequality
 $$|S| \leq \beta m \qquad \mbox{implies} \qquad \frac{1}{m} \leq \frac{\sqrt{\beta}}{\sqrt{m} \sqrt{|S|}} $$
 which leads to
  $$ \mathbb{E}_{i \in S}  \left\| x_{k+1} - x \right\|^2 \leq \left(1 +     \frac{\sigma_{\max}^2}{\sqrt{|S|} \sqrt{m}} \left(\frac{2}{\sqrt{1 - q - \beta}} + \frac{\sqrt{\beta}}{1 - q - \beta} \right)
\right) \|x_k - x\|^2.$$
\end{proof}

\textbf{Remark.} We note that the estimate
$$ \sum_{i \in S}   \left\langle x_k - x, a_i \right\rangle^2  \leq \sum_{i = 1}^{m}   \left\langle x_k - x, a_i \right\rangle^2  = \|A(x_k-x)\|^2 \leq \sigma_{\max}^2 \|x_k - x\|^2$$
could be improved. Clearly, $|S| \leq \beta m$ and thus we could define, analogously to $\sigma_{q-\beta, \min}$, the quantity
$$ \sigma_{\beta,\max}(A) = \max_{S \subset \left\{1,2,\dots, m\right\} \atop |S| =  \beta m } \sup_{x \neq 0} \frac{\|A_S x\|}{\|x\|}$$
and argue that
$$ \sum_{i \in S}   \left\langle x_k - x, a_i \right\rangle^2 \leq \sigma_{\beta, \max}(A)^2 \cdot \|x_k - x\|$$
which would lead to a slight improvement at the cost of introducing an additional quantity, $\sigma_{\beta, \max}$. It is not clear whether this could be reasonably exploited later on since it would require additional estimates on $\sigma_{\beta, \max}$. Though such estimates may be quite doable for, say, random matrices, where one might perhaps expect an estimate along the lines of
$$ \sigma_{\beta, \max}(A) \leq h(\beta) \sqrt{ \frac{m}{n}}$$
and where $h(\beta) \rightarrow 0$ as $\beta \rightarrow 0$. Such an estimate could conceivably be useful for both Question 1 and Question 2 stated in \S 2.3.

\subsection{Bounding uncorrupted equations}
Let $x_k$ be fixed and consider the set of admissible equations
$$ B =  \left\{1 \leq i \leq m:  \left| \left\langle x_k, a_i \right\rangle - b_i \right|  \leq \q\left(  \left| \left\langle x_k, a_i \right\rangle - b_i \right|  \right)_{i=1}^{m} \right\}.$$
\S 3.3 dealt with the subset $S \subset B$ of corrupted equations. Here, we now consider the subset $B \setminus S$ which means applying the Random Kaczmarz method to an uncorrupted equation. The relevant argument is not new and is from \cite{strohmer}.
\begin{lemma}[Strohmer-Vershynin, \cite{strohmer}] We have
$$ \mathbb{E}_{i \in B \setminus S} ~\|x_{k+1} - x\|^2 \leq \left( 1 - \frac{\sigma_{q-\beta,\min}^2}{q m} \right) \|x_k - x\|^2$$
\end{lemma}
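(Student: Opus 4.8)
The plan is to reproduce the classical Strohmer--Vershynin projection computation, with the only new work being the bookkeeping of which rows are available to the algorithm. First I would observe that every index $i \in B \setminus S$ corresponds to an uncorrupted equation, so $b_i = (b_t)_i = \langle a_i, x\rangle$, and the Kaczmarz update simplifies to
$$ x_{k+1} = x_k + \left( b_i - \langle a_i, x_k\rangle\right) a_i = x_k + \langle a_i, x - x_k \rangle\, a_i. $$
Since $\|a_i\| = 1$, the vector $\langle a_i, x - x_k\rangle a_i$ is exactly the orthogonal projection of $x - x_k$ onto $\mathrm{span}(a_i)$, so the Pythagorean theorem gives the exact identity
$$ \|x_{k+1} - x\|^2 = \|x_k - x\|^2 - \langle a_i, x_k - x\rangle^2. $$

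Next I would average this identity over $i \in B\setminus S$ chosen uniformly at random, using the notation from the excerpt, to get
$$ \mathbb{E}_{i \in B \setminus S} \|x_{k+1} - x\|^2 = \|x_k - x\|^2 - \frac{1}{|B\setminus S|} \|A_{B\setminus S}(x_k - x)\|^2. $$
The remaining work is to bound $|B\setminus S|$ from above and $\|A_{B\setminus S}(x_k-x)\|^2$ from below. Since $|B| = qm$ and $|S| \le |C| \le \beta m < qm$, the set $B\setminus S$ is nonempty and satisfies $(q-\beta)m \le |B\setminus S| \le qm$, so in particular $1/|B\setminus S| \ge 1/(qm)$. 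For the second quantity I would select any subset $S' \subseteq B\setminus S$ with $|S'| = (q-\beta)m$ (possible since $|B\setminus S| \ge (q-\beta)m$) and invoke monotonicity together with the definition of $\sigma_{q-\beta,\min}$:
$$ \|A_{B\setminus S}(x_k - x)\|^2 \ge \|A_{S'}(x_k - x)\|^2 \ge \sigma_{q-\beta,\min}^2 \,\|x_k - x\|^2. $$

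Combining the last three displays yields
$$ \mathbb{E}_{i \in B\setminus S}\|x_{k+1}-x\|^2 \le \left(1 - \frac{\sigma_{q-\beta,\min}^2}{qm}\right)\|x_k - x\|^2, $$
which is the claim. I do not anticipate a real obstacle in this lemma; the only point requiring a little care is that $\sigma_{q-\beta,\min}$ is defined as a minimum over row subsets of size \emph{exactly} $(q-\beta)m$, so one must pass to a sub-subset $S'$ of $B\setminus S$ rather than apply the bound to $B\setminus S$ directly, and — as elsewhere in the paper — one should read $qm$, $\beta m$ and $(q-\beta)m$ as the appropriate integers, which changes nothing of substance. The genuinely delicate estimates, namely the quantile bound of Lemma 1 and the control of the corrupted step in Lemma 2, have already been carried out; this lemma is the easy half, and the one subtlety worth flagging is simply that the averaging set here is $B\setminus S$ and not all of $B$, which is what forces the factor $qm$ rather than $(q-\beta)m$ in the denominator.
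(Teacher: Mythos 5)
Your proposal is correct and follows essentially the same route as the paper: the paper simply cites the Strohmer--Vershynin bound for the submatrix $A_{B\setminus S}$ and then uses $\|A_{B\setminus S}\|_F^2 = |B\setminus S| \leq qm$ (normalized rows) together with $\sigma_{\min}(A_{B\setminus S}) \geq \sigma_{q-\beta,\min}$ via $|B\setminus S| \geq (q-\beta)m$, which is exactly your row-counting and sub-subset monotonicity argument. The only difference is that you inline the one-step projection identity instead of invoking \cite{strohmer} as a black box, which changes nothing of substance.
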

\begin{proof} Since we are dealing with respect to uncorrupted equations, one step of the
$q-$quantile Random Kaczmarz method is merely one step of Random Kaczmarz applied to the submatrix $A_{B \setminus S}$. Therefore, using the Strohmer-Vershynin bound,
$$ \mathbb{E}_{i \in B \setminus S} ~\|x_{k+1} - x\|^2 \leq \left( 1 - \frac{\sigma_{\min}(A_{B \setminus S})^2}{\|A_{B \setminus S}\|_F^2} \right) \|x_k - x\|^2.$$
We have, by definition,
$$ \sigma_{\min}(A_{B \setminus S})^2 \geq \sigma_{\frac{|B \setminus S|}{m}, \min}^2$$
and
$$ |B \setminus S| \geq (q - \beta)m$$
from which we get
$$ \sigma_{\min}(A_{B \setminus S})^2 \geq \sigma_{q-\beta, \min}^2.$$
The normalization $\|a_i\|=1$ implies that
$$ \|A_{B \setminus S}\|_F^2 = |B \setminus S| \leq |B| \leq qm.$$
\end{proof}

\subsection{Conclusion} We can now conclude the argument. 

\begin{proof}[Proof of the Theorem]Suppose we are given $x_k$ and the admissible set of equations
$$ B =  \left\{1 \leq i \leq m:  \left| \left\langle x_k, a_i \right\rangle - b_i \right|  \leq \q\left(  \left| \left\langle x_k, a_i \right\rangle - b_i \right|  \right)_{i=1}^{m} \right\}.$$
We recall that the set $S \subset B$ indexes the corrupted equations in the $q-$th quantile.
We have
 \begin{align*}
 \mathbb{E} \|x_{k+1} - x\|^2 &= \mathbb{E}_{i \in S} \|x_{k+1} - x\|^2 \cdot \mathbb{P}\left( i \in S \right) +  \mathbb{E}_{i \in B \setminus S} \|x_{k+1} - x\|^2 \cdot \mathbb{P}\left( i \in B \setminus S \right) \\
 &\leq \mathbb{E}_{i \in S} \|x_{k+1} - x\|^2 \cdot \frac{|S|}{ qm} +  \mathbb{E}_{i \in B \setminus S} \|x_{k+1} - x\|^2 \cdot \left( 1 - \frac{|S|}{q m}\right).
 \end{align*}
 Using Lemma 2
   $$ \mathbb{E}_{i \in S}  \left\| x_{k+1} - x \right\|^2 \leq \left(1 +     \frac{\sigma_{\max}^2}{\sqrt{|S|} \sqrt{m}} \left(\frac{2}{\sqrt{1 - q - \beta}} + \frac{\sqrt{\beta}}{1 - q - \beta} \right)
\right) \|x_k - x\|^2$$
 and Lemma 3
 $$ \mathbb{E}_{i \in B \setminus S} ~\|x_{k+1} - x\|^2 \leq \left( 1 - \frac{\sigma_{q-\beta,\min}^2}{q m} \right) \|x_k - x\|^2,$$
we arrive at
 \begin{align*}
 \mathbb{E} \|x_{k+1} - x\|^2 &\leq \frac{|S|}{qm}\left(1 +     \frac{\sigma_{\max}^2}{\sqrt{|S|} \sqrt{m}} \left(\frac{2}{\sqrt{1 - q - \beta}} + \frac{\sqrt{\beta}}{1 - q - \beta} \right)
\right) \|x_k - x\|^2 \\
&+ \left(1-\frac{|S|}{qm}\right)  \left( 1 - \frac{\sigma_{q-\beta,\min}^2}{q m} \right) \|x_k - x\|^2.
\end{align*}
The upper bound is monotonically increasing in $|S|$, the worst case is $|S| = \beta m$. 
Hence
\begin{align*}
\mathbb{E} \|x_{k+1} - x\|^2 &\leq \frac{\beta }{q}  \left[ 1 +     \frac{\sigma_{\max}^2}{\sqrt{\beta} m} \left(\frac{2}{\sqrt{1 - q - \beta}} + \frac{\sqrt{\beta}}{1 - q - \beta} \right)\right] \|x_k - x\|^2. \\
&+ \left(1 - \frac{\beta }{q}\right)  \left( 1 - \frac{\sigma_{q-\beta, \min}^2}{qm } \right)\| x_k - x\|^2.
\end{align*}

We first rewrite this as
\begin{align*}
\frac{\mathbb{E} \|x_{k+1} - x\|^2}{\|x_k - x\|^2} &\leq  1 +   \frac{  \sigma_{\max}^2}{q m} \left(\frac{2\sqrt{\beta} }{\sqrt{1 - q - \beta}} + \frac{\beta}{1 - q - \beta} \right) -(q-\beta)\frac{ \sigma_{q-\beta,\min}^2}{q^2 m} .
\end{align*}
In order to ensure decay in expectation, we require
$$  \frac{q}{q-\beta}\left( \frac{2\sqrt{\beta} }{\sqrt{1 - q - \beta}} + \frac{\beta}{1 - q - \beta} \right) <  \frac{\sigma_{q-\beta,\min}^2}{ \sigma_{\max}^2}.$$
\end{proof}

\section{Remarks}

\subsection{Computational aspects.} The way we introduce the algorithm, computing the $q-$quantile of $ \left\{ \left| \left\langle x_k, a_i \right\rangle - b_i \right| : 1 \leq i \leq m \right\}$, requires the computation of $m$ different inner products at each step which is computationally expensive.
The algorithm proposed by Haddock, Needell, Rebrova \& Swartworth \cite{hadd} has an additional parameter: pick a certain number $t$ of equations uniformly at random and compute the $q-$quantile with respect to those. $t$ random samples being used to estimate the $q-$quantile reduces computational cost by a factor of $t/m$. However, the main result of \cite{hadd} requires $t=m$: the quantile is computed exactly and the underlying method reduces to the method we described.
Estimating the $q-$quantile using random samples is a rather stable process. In particular, the likelihood of, say, using $t=50$ samples to estimate the median and ending up getting a value in the $99-$th percentile is extremely unlikely $(1-0.99)^{50} = 10^{-100}$ and it is clear that sampling will produce a valuable speed-up in a reliable way. Another difference is that the algorithm does not specify $\|a_i\|=1$, however, the matrices are assumed to belong to certain families of random matrices for which one expects tight concentration of the norm of a row. This assumption of rows being roughly comparable built into the structure of the algorithm (otherwise one would weigh things differently, \S 4.3).

\subsection{Proof of the Corollary} The purpose of this section is to discuss the case of Gaussian Random Matrices subject to the heuristic ($\diamond$) mentioned above. The condition to be checked is
$$  \frac{q}{q-\beta}\left( \frac{2\sqrt{\beta} }{\sqrt{1 - q - \beta}} + \frac{\beta}{1 - q - \beta} \right) <  \frac{\sigma_{q-\beta,\min}^2}{ \sigma_{\max}^2}.$$
 We have 
 $$ \sigma_{\max}^2 \sim (1 + o(1)) \cdot \frac{m}{n}$$
 and, assuming $(\diamond)$, we expect
 $$ \sigma_{q-\beta, \min}^2 = (1+o(1)) \cdot \alpha_{q -\beta} \cdot \frac{m}{n}.$$
 and thus the relevant question is when 
$$  \frac{q}{q-\beta}\left( \frac{2\sqrt{\beta} }{\sqrt{1 - q - \beta}} + \frac{\beta}{1 - q - \beta} \right) <  \alpha_{\beta -q}.$$
is satisfied. Setting $q = 0.88$, we see with some minor computations that the inequality is satisfied for all
$ \beta < 0.0056.$
There is reason to believe (as indicated in various parts of the proof) that the true value is quite a bit larger.

\subsection{Matrices without normalization.} A natural question is whether it is possible to extend these types of considerations to matrices that do not have normalized rows. It is clear that, in such a case, the notion of quantiles will have to be adapted: consider, for example, a matrix $A \in \mathbb{R}^{m \times n}$ where $\|a_1\| \gg \varepsilon^{-1}$ while $\|a_i\| \ll \varepsilon$ for all $2 \leq i \leq m$. As $\varepsilon \rightarrow 0$, we see that the first row is much more important than the other rows and assumes a dominant role. Then, however,  the importance of that particular equation needs to be accounted for in the overall regime. 
In the uncorrupted regime, this is naturally accounted for by the Strohmer-Vershynin scaling: selecting the $i-$th equation with likelihood proportional to $\|a_i\|_{\ell^2}^2$. This suggests changing the definition of corruption away from the size of the support $\|b - b_t\|_{\ell^0} \leq \beta m$ to a condition of the type
$$ \sum_{i \in C} \|a_i\|^2 \leq \beta \sum_{i=1}^{m} \|a_i\|^2 = \beta \cdot \|A\|_F^2.$$
Given the number of open questions even under the assumption $\|a_i\|_{\ell^2}=1$, we have not pursued this alternative but consider it to be very interesting.

\subsection{Different Selection Probabilities} The entire approach in this paper is based on selecting equations with equal likelihood (provided $\|a_i \| =1$). 
However, it is well understood that for classical (uncorrupted) Random Kaczmarz, it is advantageous to pick equations that are violated more strongly more frequently 
\cite{agmon, cenker, motzkin}.  Suppose that $\|a_i\|=1$, that $p \geq 0$ and that
$$ \mathbb{P}(\mbox{we choose equation}~i) =  \frac{\left| \left\langle a_i, x_k \right\rangle - b \right|^p}{\|Ax_k - b \|^p_{\ell^p}},$$
then \cite{stein2} shows that for uncorrupted linear systems
$$ \mathbb{E} \left\| x_k - x \right\|_2^2 \leq \left(1 - \inf_{z \neq 0}  \frac{\|A z \|^{p+2}_{\ell^{p+2}}}{\|Az \|^p_{\ell^p}\|z\|^2_{2}}\right)^k \|x_0 - x\|_2^2$$
which is at least the likelihood of the classical Random Kaczmarz method \cite{strohmer} since
$$ \inf_{z \neq 0} \frac{\|A z \|^{p+2}_{\ell^{p+2}}}{\|A z \|^p_{\ell^p}\|z\|^2_{2}} \geq  \frac{1}{\|A\|_F^2  \cdot \|A^{-1}\|^2}$$
with equality if and only if the singular vector $v_n$ corresponding to the smallest singular value of $A$ has the property that $A v_n$ is a constant vector. 
It seems somewhat conceivable that a similar phenomenon is in effect here: the purpose of the $q-$quantile restriction is to ensure that the impact of corrupted
equations is limited, however, by selecting equations that are barely violated, one certainly slows down the convergence rate. This could be an interesting avenue for further research (also with respect to Question 1 and Question 2 in \S 2.3 since such quantities may be easier to analyze for random matrices).

\subsection{Stochastic Gradient Descent.} Finally, we conclude by noting that problems of the type
$$\|Ax - b\|^2  = \sum_{i=1}^{n} \left( \left\langle a_i, x \right\rangle - b_i\right)^2 \rightarrow \min$$
can be, tautologically, be interpreted as
$$ \sum_{i=1}^{n} f_i(x)^2 \rightarrow \min \qquad \mbox{where} \qquad f_i(x) = \left\langle a_i, x \right\rangle - b_i.$$
The Lipschitz constant of $f_i$ is $\|a_i\|_{\ell^2}$ which motivates thinking of a Random Kaczmarz method as a basic form of stochastic gradient descent (see Needell, Srebro \& Ward \cite{need3}). This analogy is also discussed in Haddock, Needell, Rebrova \& Swartworth \cite{hadd} who describe an analogous algorithm for SGD (see also \cite{chi, dekel, kawa, lic, man}). We believe that the setting of quantile-Random Kaczmarz method applied to corrupted linear system may be a useful (because reasonably explicit) model for understanding the effect of manipulating mini-batches in SGD.

\end{document}